\newtheorem{theorem}{Theorem}
\newtheorem{corollary}{Corollary}
\theoremstyle{definition}
\title{Reconfiguring 10-colourings of planar graphs}
\author{Carl Feghali \\ Department of Informatics, \\ University of Bergen, \\ Bergen, Norway\\
{\small \texttt{carl.feghali@uib.no}}}
\date{}
\begin{document}
\maketitle

\begin{abstract}
Let $k \geq 1$ be an integer. The reconfiguration graph $R_k(G)$ of the $k$-colourings
of a graph~$G$ has as vertex set the set of all possible $k$-colourings
of $G$ and two colourings are adjacent if they differ on exactly
one vertex.  

A conjecture of Cereceda from 2007 asserts that for every integer $\ell \geq k + 2$ and $k$-degenerate graph $G$ on $n$ vertices, $R_{\ell}(G)$ has diameter $O(n^2)$. The conjecture has been verified only when $\ell \geq 2k + 1$. We give a simple proof that if $G$ is a planar graph on $n$ vertices, then $R_{10}(G)$ has diameter at most $n^2$. Since planar graphs are $5$-degenerate, this affirms Cereceda's conjecture for planar graphs in the case $\ell = 2k$.  \end{abstract}

Let $k \geq 1$ be an integer. The reconfiguration graph $R_k(G)$ of the $k$-colourings
of a graph~$G$ has as vertex set the set of all possible $k$-colourings
of $G$ and two colourings are adjacent if they differ on the colour of exactly
one vertex of $G$.  A \emph{list assignment} of a graph  is a function $L$ that assigns to each vertex $v$ a list $L(v)$ of colours. The graph $G$ is \emph{$L$-colourable} if it has a proper colouring $f$ such that $f(v) \in L(v)$ for each vertex $v$ of $G$.  

For a positive integer $d$, a graph $G$ is \emph{$d$-degenerate} if every subgraph of $G$ contains a vertex of degree at most $d$. Expressed in another way, $G$ is $d$-degenerate if there there exists an ordering $v_1, \dots, v_n$ of the vertices in $G$ such that each $v_i$ has at most $d$ neighbours $v_j$ with $j < i$. 

Reconfiguration problems have received much attention in the past decade; we refer the reader to the surveys by van den Heuvel \cite{He13} and Nishimura \cite{nishimura}.  

In this note, we are concerned with a conjecture of  Cereceda \cite{luisthesis} from 2007 which asserts that for every integer $\ell \geq k + 2$ and $k$-degenerate graph $G$ on $n$ vertices, $R_{\ell}(G)$ has diameter $O(n^2)$. Cereceda \cite{luisthesis} verified the conjecture  whenever $\ell \geq 2k + 1$ but the conjecture remains open for every other value $2k \geq \ell \geq k + 2$. It is also known to hold for graphs of bounded tree-width \cite{bonamy13} (the claimed shorter proof in \cite{feghalipaths} yields instead a bound of $O(tn^2)$ on the diameter, where $t$ is the tree-width of the graph under consideration) as well as $(\Delta - 1)$-degenerate graphs \cite{brooks}, where $\Delta$ is the maximum degree of the graph under consideration. Our aim in this note is to address the conjecture for planar graphs in the following theorem. 

\begin{theorem}\label{main:thm}
For every planar graph $G$ on $n$ vertices, $R_{10}(G)$ has diameter at most~$n^2$. 
\end{theorem}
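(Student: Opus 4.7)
I would proceed by induction on $n$; the base cases (small $n$) are immediate. For the inductive step, since planar graphs are $5$-degenerate (by Euler's formula), there is a vertex $v\in V(G)$ with $\deg(v)\le 5$. Set $G'=G-v$, a planar graph on $n-1$ vertices, and let $\alpha,\beta$ be the two given $10$-colourings of $G$. Restricting to $G'$, the inductive hypothesis yields a reconfiguration sequence $\gamma_0=\alpha|_{G'},\gamma_1,\dots,\gamma_m=\beta|_{G'}$ in $R_{10}(G')$ of length $m\le (n-1)^2$.

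The goal is to lift this to a sequence in $R_{10}(G)$ from $\alpha$ to $\beta$ by choosing, for each index $i$, a colour $c_i$ for $v$ that avoids $\gamma_i(N(v))$, with $c_0=\alpha(v)$ and $c_m=\beta(v)$. Because $|N(v)|\le 5$, at every step at least five colours remain available for $v$. Most transitions $\gamma_i\to\gamma_{i+1}$ lift directly as a single step of $R_{10}(G)$; the only problematic case is when the step recolours a neighbour of $v$ to the current colour $c_i$, in which event one first performs a preliminary recolouring of $v$ to any safe colour (at least $10-5-1=4$ options remain) before executing the $G'$-step.

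The principal obstacle is to achieve the precise bound $n^2$ rather than the naive $2(n-1)^2+O(n)$ that a direct lift would give. Since $n^2-(n-1)^2=2n-1$, one must show that across the entire sequence $v$ is recoloured at most roughly $n$ times. I would address this by selecting the colours $c_i$ adaptively with a form of look-ahead---at each forced recolouring, picking the new colour so that it stays valid for the longest subsequent stretch---or through an amortised argument exploiting the fact that only the five neighbours of $v$ in $G'$ can ever dislodge $v$'s colour. Securing this tight inductive slack of $2n-1$ extra steps is, I expect, the crux of the proof; it may require invoking planarity beyond bare $5$-degeneracy, for instance via a strengthened inductive statement involving list assignments on $N[v]$ in the spirit of Thomassen's $5$-choosability theorem, so that $v$'s colour can be chosen from a residual list of size $\ge 5$ uniformly throughout the lifted sequence.
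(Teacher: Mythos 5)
There is a genuine gap, and you have in fact located it yourself: everything in your plan up to ``the principal obstacle'' is routine, and the obstacle is not resolved. The delete-a-low-degree-vertex induction is exactly the argument that proves Cereceda's conjecture for $\ell \geq 2k+1$ and is exactly what breaks down at $\ell = 2k$. The inductive hypothesis hands you the inner sequence $\gamma_0,\dots,\gamma_m$ as a black box: it may recolour the five neighbours of $v$ a total of $\Theta(m)=\Theta(n^2)$ times, cycling their colours adversarially so that whatever colour $v$ currently holds is dislodged again within $O(1)$ steps. Neither a greedy look-ahead (choosing the colour of $v$ that survives longest) nor an amortised charge to the neighbours gives a bound better than the number of recolourings of $N(v)$ in the inner sequence, and even strengthening the induction to ``each vertex is recoloured at most $n-1$ times'' only caps the forced moves of $v$ at about $5(n-1)$, which destroys that same strengthened hypothesis at the next level. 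So the $2n-1$ units of slack you need are not obtainable by this route, at least not without a substantially new idea.

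The paper sidesteps the lifting problem entirely. Fix a $5$-degeneracy order $v_1,\dots,v_n$ once and for all, let $h$ be the first index where the two colourings $\alpha$ and $\beta$ disagree, and in a single \emph{phase} recolour each of $v_h,\dots,v_n$ at most once so that $v_h$ lands on $\beta(v_h)$ and $v_1,\dots,v_{h-1}$ are untouched; since $h$ strictly increases from phase to phase, $n$ phases of at most $n$ steps each give the $n^2$ bound. The existence of the target colouring for a phase is where planarity (and your correct instinct about Thomassen) enters, but not in the way you envisage: one puts the singleton list $\{\beta(v_h)\}$ on $v_h$ and, on each later $v_i$, the list of the $10$ colours minus the current colours of its at most five \emph{earlier} neighbours (a list of size at least $5$), and invokes Thomassen's theorem that a planar graph with one precoloured vertex and $5$-lists elsewhere is list-colourable. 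Recolouring to this target from $v_n$ backwards keeps every intermediate colouring proper. The reformulation of one reconfiguration phase as a single list-colouring instance is the idea your proposal is missing.
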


Since planar graphs are $5$-degenerate, Theorem \ref{main:thm} affirms Cereceda's conjecture for planar graphs in the case $\ell = 2k$. In all other cases, some partial results are known. Given a planar graph $G$ on $n$ vertices, it is shown in \cite{bousquet11} that $R_\ell(G)$ has diameter $O(n^c)$ for each $\ell \geq 8$ and some (large) positive constant $c$ (see \cite{feghali} for a short proof of this result with a weaker bound on $c$) while in \cite{eiben} it is shown that $R_7(G)$ has diameter $2^{O(\sqrt{n})}$. 

Let us note that the novelty of our approach lies, in some sense, on a new trick that essentially reformulates the reconfiguration problem as a list colouring problem. In particular, Theorem \ref{main:thm} will follow as a corollary from the following special case of a famous theorem due to Thomassen \cite{thomassen1}.

\begin{theorem}\label{thm:planar}
Let $G$ be a planar graph, and let $v$ be a vertex of $G$. Suppose  that $L(u)$ is a list of one colour if $u = v$ and a list of at least five colours if $u \in V(G) - \{v\}$. Then $G$ is $L$-colourable.   
\end{theorem}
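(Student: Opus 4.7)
The plan is to deduce the statement from the standard strengthening of Thomassen's theorem that underlies the inductive proof of $5$-choosability of planar graphs: if $H$ is a plane near-triangulation whose outer face is bounded by a cycle $C$, two adjacent vertices $x, y$ of $C$ are assigned distinct single colours, every other vertex of $C$ has a list of size at least $3$, and every interior vertex has a list of size at least $5$, then $H$ admits a proper $L$-colouring extending the pre-colouring of $x$ and $y$.

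First I would reduce to the case in which $G$ is $2$-connected. If $G$ is disconnected, the component containing $v$ is a smaller instance of the theorem, while every other component is $L$-colourable by Thomassen's $5$-choosability theorem. If $G$ is connected but has a cut vertex $x$ separating a side $G_1$ containing $v$ from a side $G_2$, apply induction to $G_1$ to obtain a colour $c$ for $x$, then apply induction to $G_2$ with $L(x)$ shrunk to $\{c\}$; the two partial colourings agree at $x$ and combine to an $L$-colouring of $G$.

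Assuming $G$ is $2$-connected, I would fix a planar embedding with $v$ on the outer face (any face incident to $v$ can be made the outer face), and pick a neighbour $u$ of $v$ on the outer cycle. Since $|L(u)| \geq 5 > 1$, one can choose some $c \in L(u) \setminus L(v)$ and shrink $L(u)$ to $\{c\}$. Triangulating every interior face by adding edges only restricts the set of proper $L$-colourings, so it suffices to $L$-colour the resulting near-triangulation. This new instance fits the hypothesis of the stronger Thomassen theorem: $v$ and $u$ are two adjacent pre-coloured vertices on the outer cycle with distinct single-colour lists, every other outer vertex retains a list of size at least $5 \geq 3$, and every interior vertex retains a list of size at least $5$, so $L$-colourability follows.

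The main obstacle is the block-decomposition step; one must check that pre-colouring a cut vertex preserves the inductive hypothesis on the opposite side and that the resulting colourings glue consistently. Once this is in hand, the rest of the argument is a straightforward reduction, and the trivial base cases (an isolated vertex, or a graph consisting of a single edge) are handled directly.
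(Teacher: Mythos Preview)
The paper does not actually give a proof of this statement: it is presented as ``a special case of a famous theorem due to Thomassen'' with a citation to \cite{thomassen1}, and is then used as a black box in the proof of Theorem~\ref{main:thm}. So there is no argument in the paper to compare yours against beyond the implicit appeal to Thomassen.

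Your derivation is correct and is the standard way to extract the one-precoloured-vertex variant from Thomassen's inductive lemma. The block-decomposition step is sound: both sides at a cut vertex $x$ are strictly smaller, the side containing $v$ is an instance of the same statement, and after colouring it the colour of $x$ becomes the single-colour list for the other side, which is again an instance of the same statement (this also handles the case $v=x$). In the $2$-connected case, embedding with $v$ on the outer cycle guarantees that $v$ has a neighbour $u$ along that cycle; choosing $c\in L(u)\setminus L(v)$ and triangulating the bounded faces yields exactly the hypotheses of Thomassen's lemma (adjacent precoloured pair on the outer cycle with distinct colours, outer lists of size $\ge 3$, interior lists of size $\ge 5$). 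The base cases you mention cover $|V(G)|\le 2$, where $2$-connectivity is vacuous. In short, your proposal supplies the deduction that the paper leaves to the citation, and does so in the expected way.
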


\begin{proof}[Proof of Theorem \ref{main:thm}]
Since $G$ is $5$-degenerate, we can order the vertices of $G$ as $v_1, \dots, v_n$ such that each $v_i$ has at most five neighbours $v_j$ with $j < i$.  

Let $\alpha$ and $\beta$ be $10$-colourings of $G$, and
let $h$ be the lowest index such that $\alpha(v_h) \not = \beta(v_h)$. Starting from $\alpha$, we shall describe a sequence of recolourings such that
\begin{itemize}
\item for $i < h$, $v_i$ is not recoloured,
\item for $i > h$, $v_i$ is recoloured at most once, and
\item $v_h$ is recoloured with colour $\beta(v_h)$. 
\end{itemize}
By repeatedly using such sequences, we can recolour $\alpha$ to $\beta$ by at most $n$ recolourings per vertex and the theorem follows. 

To describe the sequence, let $H$ be the graph induced by $S = \{v_h, \dots, v_n\}$. We start by finding a list assignment $L$ of $H$  as follows:
\begin{itemize}
\item $L(v_h) = \{\beta(v_h)\}$, and
\item for $i > h$, $L(v_i) = \{1, \dots, 10\} \setminus \{\alpha(v_j): (v_i, v_j) \in E(G), j < i \}$. 
\end{itemize} 
Applying Theorem \ref{thm:planar}, we obtain an $L$-colouring $f$ of $H$.  We then simply recolour $v_k$ with $f(v_k)$  starting with $v_n$ and working backwards through $S$. (It is possible that $f(v_k) = \alpha(v_k)$ in which case the colour of $v_k$ is unchanged.) Each colouring obtained is proper since $v_n$ has no neighbours coloured $f(v_n)$ and when a vertex $v_k$, $k < n$, is recoloured, its neighbours $v_j$ with $j < k$ do not have colour $f(v_k)$ by definition  of $L(v_k)$ nor do its other neighbours $v_j$ with $j > k$ since $f$ is a proper colouring. Given that, at the end of the sequence, $v_h$ is recoloured to colour $\beta(v_h)$, this completes the proof.  
\end{proof}

It is not difficult to prove the following theorem using the same approach as in the proof of Theorem \ref{main:thm}. 

\begin{theorem}\label{thm:cor}
Let $k$ and $\ell$ be positive integers, let $G$ be a $k$-degenerate graph on $n$ vertices, and let $v$ be a vertex of $G$. Suppose that $L(u)$ is a list of one colour if $u = v$ and a list of at least $\ell$ colours if $u \in V(G) - \{v\}$. If $G$ is $L$-colourable, then $R_{k + \ell}(G)$ has diameter at most $n^2$. 
\end{theorem}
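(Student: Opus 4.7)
The plan is to mirror the proof of Theorem \ref{main:thm} essentially verbatim, replacing the appeal to Thomassen's theorem with an appeal to the list-colourability hypothesis assumed here. Specifically, I would fix a $k$-degenerate ordering $v_1, \dots, v_n$ of $G$ (which exists by $k$-degeneracy), and given two $(k+\ell)$-colourings $\alpha, \beta$, take $h$ to be the smallest index with $\alpha(v_h) \neq \beta(v_h)$. The goal is to produce a recolouring sequence of length at most $n$ that fixes $v_i$ for $i < h$, recolours each $v_i$ with $i > h$ at most once, and recolours $v_h$ to $\beta(v_h)$; iterating at most $n$ times then gives the bound $n^2$.

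To build the sequence, let $H$ be the subgraph induced by $S = \{v_h, \dots, v_n\}$ and define a list assignment $L'$ on $H$ by $L'(v_h) = \{\beta(v_h)\}$ and, for $i > h$, $L'(v_i) = \{1, \dots, k+\ell\} \setminus \{\alpha(v_j) : v_iv_j \in E(G),\ j < i\}$. Since the ordering is $k$-degenerate, each $v_i$ with $i > h$ has at most $k$ earlier neighbours, so $|L'(v_i)| \geq \ell$. If we can produce an $L'$-colouring $f$ of $H$, then recolouring $v_n, v_{n-1}, \dots, v_h$ in that order to their $f$-values keeps each intermediate colouring proper by exactly the argument given for Theorem \ref{main:thm} (neighbours with smaller index are protected by the definition of $L'$, neighbours with larger index by the fact that $f$ is a proper colouring of $H$).

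The one step that needs a small additional argument is transferring the hypothesis, which is stated for $G$, to the induced subgraph $H$. The plan here is to extend $L'$ to a list assignment $L$ on all of $G$ by giving each vertex $w \in V(G) \setminus S$ an arbitrary list of $\ell$ colours. Then $L$ has the shape required by the hypothesis (one colour at $v_h$, at least $\ell$ colours elsewhere), so $G$ is $L$-colourable, and the restriction of any such colouring to $V(H)$ is an $L'$-colouring of $H$. I expect this subgraph-to-graph reduction to be the only real subtlety; once it is in hand, the rest of the argument is the same iteration as in the proof of Theorem \ref{main:thm}, giving the diameter bound $n^2$.
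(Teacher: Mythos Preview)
Your proposal is correct and matches the approach the paper indicates (the paper merely asserts that the proof follows the same method as Theorem~\ref{main:thm}, without spelling it out). You also correctly isolate and handle the one extra wrinkle not present in the planar case: because the list-colourability hypothesis is phrased for $G$ rather than for its induced subgraphs, you extend the list assignment $L'$ on $H$ to all of $G$ before invoking the hypothesis, then restrict the resulting colouring back to $H$; this is exactly the right move.
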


We state two of possibly other consequences of Theorem \ref{thm:cor}. 

\begin{corollary}
Let $G$ be a planar graph on $n$ vertices and of girth 5. Then $R_6(G)$ has diameter at most $n^2$. 
\end{corollary}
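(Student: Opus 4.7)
The plan is to mirror the proof of Theorem~\ref{main:thm} but use Theorem~\ref{thm:cor} with parameters $k = \ell = 3$. This requires two ingredients: that every planar graph of girth at least $5$ is $3$-degenerate, and a girth-$5$ analogue of Theorem~\ref{thm:planar} in which the non-precoloured vertices have lists of size at least $3$ rather than at least $5$.

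For the degeneracy, I would carry out a standard Euler-formula count: a planar graph $G$ on $n$ vertices with girth at least $5$ satisfies $2|E(G)| \geq 5 f$, and combined with $n - |E(G)| + f \geq 2$ this gives $|E(G)| \leq 5(n-2)/3$. Hence the average degree is strictly less than $10/3$ and the minimum degree is at most $3$. Since girth at least $5$ is inherited by subgraphs, iterating the argument shows that $G$ is $3$-degenerate.

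For the list-colouring input, I would appeal to the classical theorem of Thomassen asserting that planar graphs of girth at least $5$ are $3$-choosable, in its stronger inductive (precolouring-extension) form: if one vertex on the outer face is assigned a list of size $1$ and every other vertex has a list of size at least $3$, then $G$ admits a proper $L$-colouring. To apply this to an arbitrary vertex $v$ of $G$, I would first re-embed $G$ so that $v$ lies on the outer face, which is possible since every vertex is incident to some face, which can then be taken as the outer one. Combining this with the $3$-degeneracy of $G$ and feeding both into Theorem~\ref{thm:cor} yields that $R_6(G)$ has diameter at most $n^2$.

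The one delicate point, which I expect to be the main thing to justify carefully, is that bare $3$-choosability is not sufficient here: padding $L(v)$ up to size $3$ with auxiliary colours would produce some proper $3$-list-colouring of $G$, but not necessarily one that respects the prescribed colour on $v$. Hence the precolouring-extension formulation of Thomassen's theorem is essential, exactly as in the role played by Theorem~\ref{thm:planar} in the proof of Theorem~\ref{main:thm}.
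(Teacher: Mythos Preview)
Your proposal is correct and follows essentially the same route as the paper: the paper's proof simply says that planar graphs of girth~$5$ are $3$-degenerate and then invokes Theorem~\ref{thm:cor} together with Thomassen's girth-$5$ list-colouring theorem (Theorem~2.1 in \cite{thomassen2}), which is exactly the precolouring-extension version you identify as the needed ingredient. Your write-up merely unpacks in detail (the Euler-formula count, the re-embedding of $v$ onto the outer face, and the remark on why bare $3$-choosability does not suffice) what the paper compresses into a single sentence.
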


\begin{proof}
Since planar graphs of girth 5 are 3-degenerate, the result is immediate from Theorem \ref{thm:cor} combined with Theorem 2.1 in \cite{thomassen2}. 
\end{proof}

\begin{corollary}\label{corollary1}
Let $k$ be a positive integer, and let $G$ be a $k$-degenerate graph on $n$ vertices.  If $k + 1$ is prime, then $R_{2k + 1}(G)$ has diameter at most $n^2$. 
\end{corollary}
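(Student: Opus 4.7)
The plan is to deduce Corollary~\ref{corollary1} from Theorem~\ref{thm:cor} applied with $\ell = k+1$. This reduces the entire argument to establishing the following list-colouring statement: for every $k$-degenerate graph $G$, every vertex $v \in V(G)$, and every list assignment $L$ with $|L(v)| = 1$ and $|L(u)| \geq k+1$ for $u \neq v$, the graph $G$ admits an $L$-colouring. Granting this, Theorem~\ref{thm:cor} yields $R_{2k+1}(G) \leq n^2$ directly.

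One natural attempt at the list-colouring statement is to pick a degeneracy ordering $v_1 = v, v_2, \dots, v_n$ of $G$ and colour greedily: $v_1$ takes the forced colour, and each later $v_i$ has at most $k$ already-coloured neighbours against a list of size at least $k+1$, so an available colour always exists. The obstacle is that a degeneracy ordering beginning at $v$ need not exist; one can exhibit small $k$-degenerate graphs in which $v$ is the unique vertex of minimum degree and so must be peeled first rather than last. To handle these cases, I would invoke the polynomial method. Set $p = k+1$, which is prime by assumption, and work over $\mathbb{F}_p$. By Alon's Combinatorial Nullstellensatz, the desired $L$-colouring exists provided the graph polynomial
\[
P_G(x) = \prod_{\{u,w\} \in E(G)} (x_u - x_w)
\]
has a monomial $\prod_{u \in V(G)} x_u^{d_u}$ with $d_v = 0$, $d_u \leq k$ for $u \neq v$, and coefficient that does not vanish modulo $p$.

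By the Alon--Tarsi theorem, the existence of such a monomial is equivalent to the existence of an orientation $D$ of $G$ with $v$ a sink, each other vertex of out-degree at most $k$, and the numbers $EE(D)$ and $OE(D)$ of even and odd Eulerian sub-orientations of $D$ satisfying $EE(D) \not\equiv OE(D) \pmod p$. Such an orientation can be built from the $k$-degeneracy of $G$: orient every edge incident to $v$ toward $v$, then complete via a degeneracy ordering of $G - v$, correcting any excess out-degree at the neighbours of $v$ by reversing along directed cycles. The hard step is verifying that $EE(D) - OE(D) \not\equiv 0 \pmod p$: for acyclic orientations this quantity is simply $\pm 1$, but the orientations we are forced to consider here are generally cyclic, and ensuring non-vanishing modulo $p$ calls for a Chevalley--Warning-style arithmetic argument over $\mathbb{F}_p$. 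This is precisely the point at which the primality of $k+1$ is used in an essential way, and it is also the source of the main technical difficulty of this plan.
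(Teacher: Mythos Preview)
Your plan matches the paper's proof exactly at the structural level: apply Theorem~\ref{thm:cor} with $\ell=k+1$, reducing everything to the list-colouring statement you isolate. The paper does not attempt to prove that statement; its entire proof of the corollary is the single line ``Combine Theorem~\ref{thm:cor} with Theorem~6 in~\cite{dvorak}.'' The cited result supplies precisely the $L$-colourability you need, and it is indeed established via the polynomial method along the lines you sketch---primality of $k+1$ enters in showing the relevant Alon--Tarsi coefficient is nonzero modulo $p=k+1$, hence nonzero over~$\mathbb{Z}$. You have correctly located this as the nontrivial step and left it open; for the purposes of the corollary one simply cites it rather than reproving it. (One small technical point: the Combinatorial Nullstellensatz should be applied over $\mathbb{Q}$ or another field large enough that arbitrary list elements can be realised as distinct scalars; the mod-$p$ computation serves only to certify that the integer coefficient is nonzero.)
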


\begin{proof}
Combine Theorem \ref{thm:cor} with Theorem 6 in \cite{dvorak}. 
\end{proof}

\section*{Acknowledgements}

The author is grateful to Louis Esperet for pointing out Corollary \ref{corollary1}. This work is supported by the Research Council of Norway via the project CLASSIS. It was conducted while the author was visiting the Department of Applied Mathematics of the Faculty of Mathematics and Physics at Charles University.

 \bibliography{bibliography}{}
\bibliographystyle{abbrv}
 
\end{document}